\newtheorem{thm}{Theorem}[section]
\newtheorem{alg}[thm]{Algorithm}
\newtheorem{exmp}[thm]{Example}
\theoremstyle{definition}
\newtheorem{defn}[thm]{Definition}
\theoremstyle{remark}
\numberwithin{equation}{section}
\begin{document}

\title[Subalgebra Analogue to Standard basis for Ideal ]{Subalgebra Analogue to Standard basis for Ideal }%
\author{Junaid Alam Khan$^*$}%
\address{$*$Abdus Salam School of Mathematical Sciences, GCU, Lahore Pakistan}%
\email{junaidalamkhan$@$gmail.com}%

\thanks{This research was partially supported by Higher Education Commission, Pakistan  }
\subjclass[2000]{Primary 13P10, 13J10;} 
\keywords{Standard basis, Sagbi basis, Local ordering}%

\begin{abstract}
The theory of ``subalgebra basis" analogous to standard basis (the generalization of Gr\"{o}bner bases to monomial ordering which are not necessarily well ordering \cite{GP1}.) for ideals in polynomial rings over a field is developed. We call these bases ``SASBI Basis" for ``Subalgebra Analogue to Standard Basis for Ideals". The case of global orderings, here they are called ``SAGBI Basis" for ``Subalgebra Analogue to Gr\"{o}bner Basis for Ideals", is treated in \cite{RS1}. Sasbi bases may be infinite. In this paper we consider subalgebras admitting a finite Sasbi basis and give algorithms to compute them. The algorithms have been implemented as a library for the computer algebra system SINGULAR \cite{GPS1}.
\end{abstract}
\maketitle
\section{Introduction and Preliminaries}
Let $K$ be a field and  $K[x_1,\ldots,x_n]$ the polynomial the ring over the field $K$ in $n$ variables and $K[[x_1,\ldots,x_n]]$ the formal power series ring. Let $G\subset \mathcal{M} \backslash \{0\}$ where $\mathcal{M}$ is the maximal ideal of $ K[[x_1,\ldots,x_n]]$. We define  $$K[[G]]=\{Q(g_1,\ldots,g_s)\,|\,Q\in K[[y_1,\ldots,y_s]]\,\,\hbox{and}\,\,g_1,\ldots,g_s\in G\,\,\hbox{for some}\,s \}.$$ In analogy to the theory of Gr\"{o}bner bases for ideals in $K[x_1,\ldots,x_n]$ resp. standard bases for ideals in $K[[x_1,\ldots,x_n]]$ there exist a theory of Gr\"{o}bner bases for subalgebras of type $K[G]$ called Sagbi basis (c.f \cite{RS1}) and of standard bases of subalgebras of type $K[[G]]$ developed in \cite{HH1}. Let $f_{1},\dots,f_{m}\in K[x_1,\ldots,x_n]$ and consider the ideal $I=\langle f_1,\ldots,f_m\rangle K[[x_1,\ldots,x_n]] $. Suppose we want  to compute a standard basis of  $I$. There are at least three possibilities. Using Buchberger's algorithm for well orderings we can compute it upto a given degree. There exist a theory of standard bases in $K[x_1,\ldots,x_n]_{\langle x_1,\ldots,x_n\rangle}$ induced by a local degree ordering (see \cite{GP1}) and we can  compute a standard basis $g_1,\ldots,g_s\in K[x_1,\ldots,x_n]$ of $\langle f_1,\ldots,f_m\rangle K[x_1,\ldots,x_n]_{\langle x_1,\ldots,x_n\rangle}$ using Mora's tangent cone algorithm (c.f \cite{GP1}, \cite{MPT1}). It can also be computed via homogenization (Lazard's algorithm c.f \cite{L1}). In this paper we will developed a subalgebra analogue of standard bases of ideals in $K[x_1,\ldots,x_n]_{\langle x_1,\ldots,x_n\rangle}$. We will introduce subalgebra bases in a suitable localization of $K[G]$, which we call it Sasbi bases. They can be computed upto a certain degree similar to the standard basis case. They can be computed via a Sagbi bases if  the homogenized algebra has a finite Sagbi basis. In this case they can also be computed directly using a generalization of the tangent cone algorithm. We will show that  Sasbi bases are subalgebra standard bases of $K[[G]]$. Using homogenization we will get a finiteness  condition. If $K[G^{h}]$ has a finite Sagbi basis then $K[[G]]$ has a finite subalgebra standard basis. The aim of this paper is to give an algorithm to compute these bases provided $G\subset K[[x_1,\ldots,x_n]]$ is a finite set and the subalgebra standard basis for $K[[G]]$ is finite\footnote{They may be infinite as $K[[x_1,\ldots,x_n]]$ doesn't satisfy the ascending chain condition with respect to subalgebras. }.
We use the notations from \cite{GP1} and repeat them for the convenience of reader.
\begin{defn}
A monomial ordering is a total ordering $>$ on the set of monomials $Mon_n=\{x^\alpha \,|\, \alpha \in \mathbb{N}^n\}$ in n variables satisfying $$x^\alpha > x^\beta \Longrightarrow x^\gamma x^\alpha > x^\gamma x^\beta$$ for all $\alpha, \beta, \gamma  \in \mathbb{N}^n$. We also say $>$ is a monomial ordering on $K[x_1,\ldots, x_n]$  meaning that $>$ is a monomial ordering on $Mon_n$.
\end{defn}

\begin{defn}
Let $>$ be a fixed monomial ordering. Write $f\in K[x_1,\ldots,x_n]$, $f\neq 0$, in a unique way as a sum of non-zero terms $$f=a_\alpha x^\alpha +a_\beta x^\beta +\ldots +a_\gamma x^\gamma , \, \,\,\,\,\,\,\,\,\, x^\alpha > x^\beta > \ldots  > x^\gamma ,$$ and $a_\alpha ,a_\beta ,\ldots a_\gamma \in K$. We define:
\begin{itemize}
\item[1.]$LM(f):=x^\alpha$, the leading  monomial of $f$,
\item[2.]$LE(f):=\alpha$, the leading  exponent of $f$,
\item[3.]$LT(f):=a_\alpha x^\alpha$, the leading  term of  $f$,
\item[4.]$LC(f):=a_\alpha$, the leading  monomial of $f$,
\item[5.]$tail(f):=f-LT(f)$.
\item[6.]${ecart}(f):={deg}(f)-{deg(LM}(f)).$
\item[7.]$support(f):=\{x^{\alpha},x^{\beta},\ldots,x^{\gamma}\}$, the set of all monomials of $f$ with non-zero coefficent.
\item[8.]$ord(f)=deg(LM(f)).$
\end{itemize}
\end{defn}

\begin{defn}

Let $>$ be a monomial ordering on $Mon_n$.
\\1. $>$ is called global ordering if $x^\alpha > 1$ for all $\alpha \neq (0, \ldots ,0)$.
\\2. $>$ is called local ordering if $x^\alpha < 1$ for all $\alpha \neq (0, \ldots ,0)$.
\\3. $>$ is called local degree ordering, if $>$ is a local ordering and $$x^{\alpha}>x^{\beta}\,\Rightarrow deg(x^{\alpha})\leq deg(x^{\beta})$$
\end{defn}
\begin{defn}
Let $M\in GL(n,\mathbb{R})$. We can use $M$  to obtain a monomial ordering by setting
   $$x^{\alpha}>_{M}x^{\beta}\,:\Longleftrightarrow\,M{\alpha}>M{\beta},$$
where $>$ on the right-hand side is the lexicographical ordering on $\mathbb{R}^{n}$.

\end{defn}

\begin{thm}
(c.f. \cite{GP1}, page 18 )Any monomial ordering can be defined as $>_{M}$ by a matrix $M\in GL(n,\mathbb{R})$.

\end{thm}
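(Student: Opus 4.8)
The plan is to translate the statement into the language of ordered abelian groups and then realize the order by real linear forms. The compatibility axiom $x^\alpha > x^\beta \Rightarrow x^\gamma x^\alpha > x^\gamma x^\beta$ says exactly that $>$ is translation invariant, so it extends uniquely to a total order on the whole group $\mathbb{Z}^n$ (declare $a > b$ iff $a-b$ lies in the positive part) which is again compatible with addition. Writing $P = \{a \in \mathbb{Z}^n : a > 0\}$ for the positive cone, the defining properties of a monomial ordering give that $P$ is closed under addition, that $0 \notin P$, and that $\mathbb{Z}^n = P \cup \{0\} \cup (-P)$ is a disjoint union. Since $>_M$ compares $M\alpha$ and $M\beta$ lexicographically, building $M$ amounts to producing real linear forms $w_1,\dots,w_n$ (the rows of $M$) whose lexicographic comparison on $\mathbb{Z}^n$ detects membership in $P$.

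The first and decisive step is a separation lemma: there exists a nonzero $w \in \mathbb{R}^n$ with $w\cdot a \ge 0$ for every $a \in P$. I would prove this by showing that $0$ is not an interior point of the convex cone generated by $P$. Indeed, if it were, then $0$ would be a positive real combination of finitely many elements of $P$; by the rationality of solvable linear programs over integer data one may take the coefficients rational, and clearing denominators yields $0$ as a sum of finitely many elements of $P$ with positive integer multiplicities. But such a sum lies in $P$ (an additive semigroup) and hence cannot equal $0$, a contradiction. Consequently the closed convex cone $\overline{\mathrm{cone}(P)}$ is a proper closed cone with $0$ on its boundary, and a supporting hyperplane at $0$ supplies the desired $w$.

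Next I would set up an induction on the rank of the ambient lattice. The form $w_1 := w$ satisfies $a>0 \Rightarrow w_1\cdot a \ge 0$ and $a<0 \Rightarrow w_1\cdot a \le 0$, so it is weakly compatible with $>$. The order restricts to a translation-invariant total order on the sublattice $L_1 = \{a \in \mathbb{Z}^n : w_1 \cdot a = 0\}$, whose rank is strictly smaller because $w_1$ is a nonzero functional on $\mathrm{span}_{\mathbb{R}}(L_1)$. Applying the separation lemma to this restricted order produces $w_2$, and so on. Since the rank drops at each stage, after $s \le n$ steps I reach $L_s = \{0\}$, and the forms $w_1,\dots,w_s$ are linearly independent. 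For any $c \in \mathbb{Z}^n\setminus\{0\}$ the tuple $(w_1\cdot c,\dots,w_s\cdot c)$ is then nonzero, and a short induction using the weak compatibility at each level shows that the sign of its first nonzero entry equals the sign of $c$ in the order. Completing $w_1,\dots,w_s$ to a basis of $(\mathbb{R}^n)^*$ by arbitrary rows $w_{s+1},\dots,w_n$ yields $M \in GL(n,\mathbb{R})$; the extra rows never influence the comparison of distinct lattice points, so $>_M$ coincides with $>$, as required.

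I expect the separation lemma to be the main obstacle: it is the one place where the passage from the purely combinatorial total order to genuine real linear-algebraic data occurs, and it is exactly there that antisymmetry together with the semigroup structure of $P$ is used in an essential way. The subsequent induction on lattice rank and the completion to an invertible square matrix are then routine, requiring only care that each restricted order remains a legitimate compatible order and that the chosen forms stay linearly independent.
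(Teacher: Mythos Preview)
The paper does not give its own proof of this statement; it merely cites \cite{GP1}, page~18, so there is nothing in the paper to compare your argument against directly. Your outline is essentially the classical Robbiano argument and is correct. The separation lemma is indeed the heart of the matter, and your reduction---if the cone generated by $P$ were all of $\mathbb{R}^n$ then some relation $p+\sum \lambda_i p_i=0$ with $p,p_i\in P$ and $\lambda_i\ge 0$ would hold, and by rationality of feasible polyhedra over integer data one may clear denominators to obtain a contradiction with $0\notin P$---is sound. One small slip of phrasing: you write that ``$w_1$ is a nonzero functional on $\mathrm{span}_{\mathbb{R}}(L_1)$,'' but $w_1$ vanishes identically on $L_1$ by definition; what you need (and clearly intend) is that $w_1$ is nonzero on $\mathbb{R}^n$, so $L_1\subset\ker w_1$ has strictly smaller rank. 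With that corrected, the inductive step and the final completion of $w_1,\dots,w_s$ to an invertible matrix go through exactly as you describe, and the proof is complete.
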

\begin{defn}
For any monomial ordering $>$ on $Mon(x_1, \ldots ,x_n)$, we define a multiplicatively closed set.    $$S_>:=\{u\in K[x_1,\ldots,x_n]\backslash \{0\}\,|\,LM(u)=1 \}$$   Let $K[x_1,\ldots,x_n]_>:={S_>}^{-1}K[x_1,\ldots,x_n]=\{\frac{f}{u}\,|\,f,u \in K[x_1,\ldots,x_n], LM(u)=1\}$ \\the localization of $K[x_1,\ldots,x_n]$ with respect to $S_>$ and call $K[x_1,\ldots,x_n]_>$ the ring associated to $K[x_1,\ldots,x_n]$ and $>$.

\end{defn}

\begin{defn}
Let $>$ be any monomial ordering. For $f \in K[x_1,\ldots,x_n]_>$ choose $u \in K[x_1,\ldots,x_n]$ such that $LM(u)=1$ and  $uf \in K[x_1,\ldots,x_n]$. We define
$$LM(f)=LM(uf)$$
$$LC(f)=LC(uf)$$
$$LT(f)=LT(uf)$$
$$LE(f)=LE(uf)$$
$$tail(f)=f-LT(f)$$.

\end{defn}

\begin{defn}
Let $G$ be a subset  of $K[x_1,\ldots,x_n]$\\
\\ A $G$-monomial is a finite power product of the form $ G^\alpha=g_1^{\alpha_{1}}\ldots g_m^{\alpha_{m}}$ where $g_i\in G$ for $i=1,\ldots,m,$ and $\alpha=(\alpha_1,\ldots,\alpha_m)\in \mathbb{N}^m$. The set of all $G$-monomial is denoted by:$$Mon_G=\{G^\alpha| \ \alpha\in \mathbb{N}^m,\,m\in \mathbb{N} \}$$

Let $G=\{g_1,\ldots,g_s\}\subset K[x_1,\ldots,x_n]$ and $>$ be a local ordering. We define
 $$K[G]_>=(S_>\cap K[g_1,\ldots,g_s])^{-1}K[g_1,\ldots,g_s].$$

\end{defn}

\section{Sasbi basis of $K[[G]]$}
Let $\mathcal{M}$ is a maximal ideal of $K[[x_1,\ldots,x_n]]$. We fix a local degree ordering $>$ and use the notation of definition 1.2 which make sense in $K[[x_1,\ldots,x_n]]$ too. In this section we recall some result of \cite{HH1} and give an algorithm which computes Sasbi basis $K[[G]]$ upto a certain certain degree.

\begin{defn}
Given two elements $g,h \in K[[x_1,\ldots,x_n]]$, we will say that $g$ reduces to $h$ with respect to $G$ if their exist $G$-monomial $G^{\alpha}$ and $\gamma \in K$ such that $$h=g-\gamma G^{\alpha},\, \hbox{with}\,\, h=0 \, \,\hbox{or}\,\,LM(h)<LM(g)$$In this case we will write $$g \mathop\rightarrow\limits_{}^{G} h,$$ and we have that $g-h\in K[[G]]$.
\end{defn}

Consider a chain (possibly infinite) of reductions $$g \mathop\rightarrow\limits_{}^{G} h_1 \mathop\rightarrow\limits_{}^{G} h_2 \mathop\rightarrow\limits_{}^{G} \ldots \mathop\rightarrow\limits_{}^{G} h_m \mathop\rightarrow\limits_{}^{G} \ldots $$

This implies there exist $G$-products ${G}^{\alpha^{(i)}}$ and $a_{\alpha^{(i)}}\in K\backslash \{0\}$ such that $$h_m=g-\sum\limits_{i=1}^{m}a_{\alpha^{(i)}}{G}^{\alpha^{(i)}},$$ and because of the definition of the reduction $$LM(a_{\alpha^{(1)}}{G}^{\alpha^{(1)}})>LM(a_{\alpha^{(2)}}{G}^{\alpha^{(2)}})>\ldots $$If the chain is infinite, we get the following sequence in $K[[X]]:$ $$ s_m=\sum_{i=1}^{m}a_{\alpha^{(i)}}{G}^{\alpha^{(i)}},\,\,m\geq1.$$

This sequence happens to be convergent in $K[[x_1,\ldots,x_n]]$ with respect to the $\mathcal{M}$-adic topology. We denote the limit  of the sequence $(s_m)_{m\geq1}$ by $s$. Since all the terms are in the complete subalgebra $ K[[G]]$ so we have that $s\in K[[G]]$.
\begin{defn}
 If the reduction $h$ of $g$ is zero or for all $ x^{\beta}\in support(h),x^{\beta}\neq LM(G^{\alpha})$ for all G-monomial $G^{\alpha}$ then $h$ is called  $\mathbf{Normal \,form}$ of $g$ with respect to $G$. We denote the normal form $h$ by $NF(g\,|\,G)$.
\end{defn}
Normal forms always exist but the computation may use  infinite reductions. For computational reason we give an algorithm which computes the normal form up to the degree $d$.
\begin{alg}Let $>$ be any local degree ordering in $K[[x_1,\ldots,x_n]]$.
\\Input: $G\subset\mathcal{M} \backslash \{0\} $ , $g\in K[[x_1,\ldots,x_n]]$, $d\in \mathbb{Z}$.
\\Output: $h$=NF$(g\,|\,G,d)$ (the normal form of $g$ with respect to $G$ up to degree $d$)\footnote{for theoretical reasons we allow $G$ to be infinite and $d=\infty$. We have seen that for $d\rightarrow\infty$ the normal form $NF(g\,|\,G,d)$ converges in the $\mathcal{M }$-adic topology. We call this limit $NF(g\,|\,G)$, the normal form of $f$ with respect to $G$. }.
\begin{itemize}
\item $h:=g$;
\item while$(h\neq 0$ and $ord(h)\leq d)$\\
\indent$T_h=\{{G^{\alpha}} | \  LM( G^{\alpha})=LM(h)  \}\neq \phi $;\\
\indent \quad if $T_h\neq \phi$\\
\indent \qquad choose $G^{\alpha} \in T_h$ ;\\
\indent \qquad $h=h-\frac{LC(h)}{LC(G^{\alpha})} G^{\alpha}\,$;\\
\indent \quad else \\
\indent \qquad return $(LT(h)+NF(h-LT(h)\,|\,G,d);$
\item end(while)
\item return $h$;
\end{itemize}

\end{alg}
\begin{defn}
We say a set $G\subset \mathcal{M} \backslash \{0\}$ is $\mathbf{ Sasbi\, basis}$\footnote{In \cite{HH1} this is called a standard basis of subalgebras. We use this notation to be similar to sagbi bases introduced in \cite{RS1}.} of $K[[G]]$ if $$K[L(K[G])]=K[LM(G)]$$ where $L(K[G])=\{LM(g)|\,g\in K[[G]]\backslash\{0\}\,\}$, i.e $G$ that is a sasbi basis  if for all $f\in K[[G]]\backslash \{0\}$, $$LM(f)=LM(G^{\alpha})$$ for some $G$-monomial $G^{\alpha}$.

\end{defn}

\begin{exmp}
The set $G=\{ x^2,\sum_{i=3}^{\infty}x^{i} \} \subset K[[x_1,\ldots,x_n]]$ is a sasbi basis for  $K[[G]]$. Indeed, if $g\in K[[G]] \backslash \{0\}$, then $LM(g)=1$ or  $LM(g)=x^{\alpha}$, for some $\alpha \geq 2$. Hence $LM(g)\in K[x^2,x^3]=K[LM(G)]$.
\end{exmp}

Now for the characterization of the sasbi bases  in $K[[x_1,\ldots,x_n]]$ similar to those  in $K[x_1,\ldots,x_n]$ we need to define an analogue of the S-polynomial.

\begin{defn}Let $G=\{g_1,\ldots,g_s\}\subset \mathcal{M} \backslash \{0\}$. An
 $\mathbf{S}$-$\mathbf{polynomial}$ is an element of the form $$a\,{G}^{\alpha}-b\,{G}^{\beta}$$ where $a,b\in K \backslash \{0\}$ and $G^{\alpha}$ and $G^{\beta}$ are $G$-monomials, such that $LT(a\,G^{\alpha})=LM(b\,G^{\beta})$.
\end{defn}
Next theorem gives criteria for a set to be a sasbi basis  of $K[[G]]$.
\begin{thm}
(c.f. \cite{HH1}, page 50) Given $G=\{g_1,\ldots,g_s\}\subset \mathcal{M}\backslash \{0\} $, $G$ is a sasbi basis of  $K[[G]]$ if and only if every S-polynomial of $G$ has a vanishing normal form with respect to $G$.
\end{thm}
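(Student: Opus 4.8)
The plan is to prove both implications, the forward (\emph{only if}) direction being routine and the converse carrying the real content.

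For the \emph{only if} direction, I would note that every S-polynomial $p=a\,G^{\alpha}-b\,G^{\beta}$ is a finite $K$-combination of $G$-monomials, hence $p\in K[[G]]$. Assuming $G$ is a sasbi basis, I would run the normal form reduction on $p$: at each stage the partial remainder $h_m=p-\sum_{i\le m}a_{\alpha^{(i)}}G^{\alpha^{(i)}}$ is again a finite combination of $G$-monomials, so $h_m\in K[[G]]$. If some $h_m\neq 0$ were already a normal form, then by the sasbi property $LM(h_m)=LM(G^{\gamma})$ for some $G$-monomial $G^{\gamma}$, contradicting the defining property of a normal form (no monomial of its support is a leading monomial of a $G$-monomial). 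Passing to the $\mathcal{M}$-adic limit, the only possibility is $NF(p\,|\,G)=0$.

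For the \emph{if} direction I would fix $f\in K[[G]]\setminus\{0\}$ and any representation $f=\sum_{\alpha}c_{\alpha}G^{\alpha}$, convergent in the $\mathcal{M}$-adic topology and possibly infinite, and set $m:=\max_{>}\{LM(G^{\alpha})\mid c_{\alpha}\neq 0\}$. Since $>$ is a local degree ordering and each $g_i\in\mathcal{M}$, one has $ord(G^{\alpha})\ge|\alpha|$, so only finitely many $G$-monomials share a given order; this guarantees that $m$ is attained and that $S:=\{\alpha\mid c_{\alpha}\neq 0,\ LM(G^{\alpha})=m\}$ is finite. As $LM$ of a sum is $\le$ the maximum of the leading monomials of its summands, $LM(f)\le m$, and the goal is to exhibit a representation in which $m=LM(f)$, for then $LM(f)=LM(G^{\alpha})$ for some $\alpha$ and $G$ is a sasbi basis. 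The heart of the matter is to lower $m$ whenever $m>LM(f)$. In that case the level-$m$ leading terms must cancel, i.e. $\sum_{\alpha\in S}c_{\alpha}LC(G^{\alpha})=0$. Writing $P_{\alpha}:=LC(G^{\alpha})^{-1}G^{\alpha}$, each with leading term $m$, and applying Abel summation over an enumeration $S=\{\alpha_1,\ldots,\alpha_k\}$, I would rewrite $\sum_{\alpha\in S}c_{\alpha}G^{\alpha}$ as a finite sum $\sum_{j=1}^{k-1}E_j\,(P_{\alpha_j}-P_{\alpha_{j+1}})$ of S-polynomials, each with leading monomial strictly below $m$. By hypothesis each such S-polynomial has vanishing normal form, hence equals an $\mathcal{M}$-adically convergent combination of $G$-monomials whose leading monomials are all $<m$; substituting these back produces a representation of $f$ in which every $G$-monomial has leading monomial $<m$, strictly decreasing the maximum.

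It remains to argue termination. Each step replaces $m$ by some $m'<m$ while preserving $LM(f)\le m'$, yielding a strictly $>$-decreasing chain inside $\{\mu\mid LM(f)\le\mu\le m\}$. By the local degree condition this interval consists of monomials of degree between $deg(m)$ and $deg(LM(f))$, hence is finite, so the chain must terminate, necessarily at $m=LM(f)$; the coefficient of $LM(f)$ there is forced to be $LC(f)\neq 0$, producing the required $G^{\alpha}$ with $LM(G^{\alpha})=LM(f)$. I expect this termination step to be the main obstacle, since, unlike in the global (SAGBI) case, a local ordering is not a well-ordering and one cannot simply pass to a ``minimal representation''; the resolution is precisely the finiteness of the interval $[LM(f),m]$ forced by the local degree ordering, together with the $\mathcal{M}$-adic completeness of $K[[G]]$ that keeps all the infinite reductions and substitutions well defined and convergent.
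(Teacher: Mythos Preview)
The paper does not give its own proof of this statement; it is quoted from \cite{HH1} without argument, so there is nothing in the paper to compare your proposal against directly. For what it is worth, your argument is sound. The forward direction is routine as you say, and in the converse your Abel-summation rewriting of the top-level cancellation as a combination of S-polynomials, followed by substitution of their vanishing normal-form expansions, is the standard mechanism. The point you correctly flag as the obstacle---that $>$ is not a well-ordering, so one cannot simply pick a representation of minimal height---is precisely where the local \emph{degree} hypothesis earns its keep: the strictly decreasing chain $m>m'>\cdots\ge LM(f)$ lives in the finite set of monomials of degree between $\deg(m)$ and $\deg(LM(f))$ and must stop.

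It may interest you that the paper \emph{does} prove the analogous criterion in the localized polynomial setting (its Theorem~3.9), and there it argues by contradiction: it \emph{assumes} a representation of $ug$ of smallest possible height and then manufactures a smaller one. That shortcut sidesteps your explicit termination analysis but tacitly presupposes that a minimal-height representation exists. Your iterative formulation, with the finite-interval termination and the $\mathcal{M}$-adic bookkeeping needed when substituting possibly infinite normal-form expansions into an already infinite representation, is the more careful adaptation to the power-series setting of Section~2.
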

The following is the analogue of Buchberger's Algorithm for subalgebras in $K[[x_1,\ldots,x_n]]$.
\begin{alg}$\,$\\
Input: A finite subset $G$ of $\mathcal{M}\backslash \{0\}$.
\\Output: A sasbi bases $F$ for $K[[$G$]]$.
\begin{itemize}
\item $F=G$ ;
 \item $old F=\phi$ ;
\item while ($F\neq old F$)\\
\indent $S:=\{s \,|\, s \,\hbox{ is a}\, \hbox{ $S$-polynomial of}\, F \}$;\\
\indent $R:=\{r \,|\,r=NF(s\in S\,|\,F) \,\hbox {and}\, r\neq 0 \};$\\
\indent $old{F}={F}$;\\
\indent ${F}={F}\cup R$;
\item return ${F}$;
\end{itemize}
\end{alg}

\section{Sasbi Basis in the Localization of $K[x_1,\ldots,x_n]$}

   In this section first of all we will introduce Sasbi bases in $K[G]_{>}$ and prove that Sasbi bases in $K[G]_{>}$ are also Sasbi bases in $K[[G]]$.  We will prove that also in the general case the computation of a Sasbi basis with respect to a local ordering can be reduced using homogenization to the computation of a Sagbi basis with respect to a suitable global ordering. This is also here a very expensive way to compute a Sasbi basis. Therefore later a more efficient algorithm is presented. We introduce notion of weak sasbi normal form of a polynomial with respect to $G$ in $K[x_1,\ldots,x_n]_>$ and give an algorithm to compute it. Then we give a criterion for a set to be a Sasbi basis, which is the base of an algorithm to compute the Sasbi basis.

Let ${G}=\{g_1,\ldots,g_s \}\subset K[x_1,\ldots,x_n]$, $\mathcal{G}=K[G]_>$
 and $L(\mathcal{G})=\{LM(g) \,|\,g\in \mathcal{ G}\backslash\{0\}\}$.
\begin{defn}
A subset  $S\subset{\mathcal{G}}$ is called \textbf{SASBI\footnote{SASBI stands for ``Subalgebra Analogue to Standard Basis For Ideal" } Basis  } of $K[G]_>$ if
$$K[L(\mathcal{G})]=K[L(S)]$$ i.e for all $g\in K[G]_>\backslash\{0\}$ $$LM(g)=LM(S^{\alpha})$$ for some S-monomial $S^{\alpha}$.
\\ If $>$ is global, a Sasbi basis is also called a Sagbi basis.
\\ If we just say that $S$ is a Sasbi basis, we mean that $S$ is a Sasbi basis of the  $K[S]_{>}$ generated by $S$.
\end{defn}


\begin{thm}
Let $K[x_1,\ldots,x_n]_>\subset K[[x_1,\ldots,x_n]]$ be equipped with  local degree ordering $>$. Let $G=\{g_1,\ldots,g_s \}$ be a subset of $K[x_1,\ldots,x_n]$. If $S$ is a Sasbi basis of $K[G]_>$ then  $S$ is a Sasbi basis of $K[[G]]$.
\end{thm}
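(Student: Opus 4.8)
The plan is to verify the defining property of a Sasbi basis of $K[[G]]$ directly: I will show that for every $f\in K[[G]]\setminus\{0\}$ there is an $S$-monomial $S^{\beta}$ with $LM(f)=LM(S^{\beta})$. The hypothesis only supplies this for elements of $K[G]_{>}$, so the whole point is to transfer information from power series to polynomials, and the bridge is a truncation argument that exploits that $>$ is a \emph{local degree} ordering.

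First I would record two elementary facts about the order function under a local degree ordering. Since $>$ is a local degree ordering, $x^{\alpha}>x^{\beta}$ forces $\deg(x^{\alpha})\le\deg(x^{\beta})$, so $LM(h)$ is a monomial of \emph{minimal} degree occurring in $h$; equivalently $ord(h)=\deg(LM(h))$ is the least degree of a monomial in $h$, and any monomial of degree $>ord(h)$ is strictly smaller than $LM(h)$. Next, because $K[[x_1,\ldots,x_n]]$ is an integral domain, $ord$ is additive: $ord(pq)=ord(p)+ord(q)$. As each $g_i\in\mathcal M\setminus\{0\}$ has $ord(g_i)\ge 1$, every $G$-monomial satisfies $ord(G^{\alpha})=\sum_i\alpha_i\,ord(g_i)\ge|\alpha|$.

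Next comes the approximation step, which I expect to be the crux. Given $f\in K[[G]]\setminus\{0\}$, write $f=\sum_{\alpha}c_{\alpha}G^{\alpha}$ with the defining power series $Q\in K[[y_1,\ldots,y_s]]$, and set $d:=ord(f)=\deg(LM(f))$. Define the polynomial truncation
$$\tilde f:=\sum_{|\alpha|\le d}c_{\alpha}G^{\alpha}\in K[g_1,\ldots,g_s]\subseteq K[G]_{>}.$$
The tail $f-\tilde f=\sum_{|\alpha|>d}c_{\alpha}G^{\alpha}$ is a sum of terms each of order $\ge|\alpha|>d$, so $ord(f-\tilde f)\ge d+1$; thus every monomial of $f-\tilde f$ has degree $>d$ and is therefore strictly below $LM(f)$. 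Consequently the coefficient of the degree-$d$ monomial $LM(f)$ in $\tilde f=f-(f-\tilde f)$ equals $LC(f)\ne0$, while every other monomial of $\tilde f$ (whether inherited from $f$ or from $-(f-\tilde f)$) is strictly smaller than $LM(f)$. Hence $\tilde f\ne0$ and $LM(\tilde f)=LM(f)$.

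Finally, $\tilde f$ is a nonzero element of $K[G]_{>}$, so the hypothesis that $S$ is a Sasbi basis of $K[G]_{>}$ yields an $S$-monomial $S^{\beta}$ with $LM(S^{\beta})=LM(\tilde f)=LM(f)$. Since $f$ was arbitrary, this is exactly the condition for $S$ to be a Sasbi basis of $K[[G]]$; the complementary facts $S\subseteq K[G]_{>}\subseteq K[[G]]$ (each $1/u$ with $LM(u)=1$ has $u(0)\ne0$ and hence expands as a power series in the $g_i$) and $K[[S]]=K[[G]]$ are routine and guarantee that $S$ generates the correct subalgebra. The delicate point throughout is that the truncation preserves the leading monomial: this uses in an essential way that $LM(f)$ lies in the lowest degree present in $f$, which is precisely the defining feature of a local degree ordering and would fail for an arbitrary local ordering.
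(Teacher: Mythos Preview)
Your proposal is correct and follows essentially the same route as the paper: split off a polynomial part of the power-series representation that captures the leading monomial, then apply the Sasbi-basis hypothesis to this polynomial element of $K[G]_{>}$. Your explicit truncation $\tilde f=\sum_{|\alpha|\le d}c_{\alpha}G^{\alpha}$ is exactly a concrete realization of the decomposition $H=H^{(0)}+H^{(1)}$ that the paper merely asserts to exist; you supply the justification (via $ord(g_i)\ge 1$ and the local-degree property) that the paper leaves implicit.
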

\begin{proof}
For $g\in K[[G]]$ we have to prove that there exist $G$-monomial $G^{\alpha}$ such that $LM(g)$=$LM(G^{\alpha})$. If $g\in K[[G]]$  there exist $H\in K[[y_1,\ldots,y_s]]$ such that we have $g=H(g_1,\ldots,g_s)$. There  exist a decomposition of  $H=H^{(0)}+H^{(1)}$, $H^{(0)}\in K[y_1,\ldots,y_s]$ and $H^{(1)}\in K[[y_1,\ldots,y_s]]$ such that $$LM(H^{(0)}(g_1,\ldots,g_s))=LM(H(g_1,\ldots,g_s))=LM(g).$$ Since $S$ is Sasbi bases of $K[G]_>$  there exist a $G$-monomial $G^{\alpha}$, such that $LM(G^{\alpha})=LM(H^{(0)}(g_1,\ldots,g_s)).$  We get $ LM(g)=LM(G^{\alpha})$  which shows that $S$ is a Sasbi bases  for $K[[G]]$.
\end{proof}

 Now we want to show how to reduce the SASBI bases computation for local orderings using homogenization with respect to a variable ``$t$" to the computation of SASBI bases for global orderings.
\begin{thm}
Let $H=\{g_1,\ldots,g_m\}\subset K[x_1,\ldots,x_n]$ and $K[H]=K[g_1,\ldots,g_m]_>$. Here $>$ is a local  monomial ordering given by a matrix $M$. Consider $K[t,x_1,\ldots,x_n]$ with monomial ordering $>_h$ defined by the matrix
\(
\left( {\begin{array}{*{20}c}
   1 & 1 &  \cdots  & 1  \\
   0 & {} & {} & {}  \\
    \vdots  & {} & M & {}  \\
   0 & {} & {} & {}  \\
\end{array}} \right)
\)
\\ $>_h$ is a global ordering.
 We define $G_i$ to be $\,G_i:=g_i^h \in K[t,x_1,\ldots,x_n]$. Assume $\widehat{S}= \{S_1,S_2,\ldots,S_k \}\subset K[t,x_1,x_2,\ldots,x_n] $ is a  Sagbi basis of
$K[G_1,\ldots,G_m]$ with respect to $>_h$. Let $s_j:=S_j(t=1)\, ,1\leq j\leq k$,
then $S=\{s_1,\ldots,s_k \}$ is a Sasbi basis of  $K[g_1,g_2,\ldots,g_m]_>$.
\end{thm}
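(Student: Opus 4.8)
The plan is to transfer the Sagbi property of $\widehat{S}$ upstairs in $K[t,x_1,\dots,x_n]$ down to $K[x_1,\dots,x_n]_>$ through the dehomogenization $t=1$. Write $\pi\colon K[t,x_1,\dots,x_n]\to K[x_1,\dots,x_n]$ for the $K$-algebra homomorphism $\pi(F)=F(t=1)$; then $\pi(G_i)=g_i$, $\pi(S_j)=s_j$, and $\pi(K[G_1,\dots,G_m])=K[g_1,\dots,g_m]$, so in particular $K[s_1,\dots,s_k]=K[g_1,\dots,g_m]$ and $S$ generates the correct subalgebra. Put $\mathcal{A}:=K[G_1,\dots,G_m]$ and $L_{>_h}(\mathcal{A}):=\{LM_{>_h}(F)\mid F\in\mathcal{A}\setminus\{0\}\}$.

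First I would record a dictionary between $>_h$ and $>$ on homogeneous polynomials. Since the top row of the homogenizing matrix is $(1,\dots,1)$, $>_h$ compares monomials first by total degree in $t,x_1,\dots,x_n$ and breaks ties by $M$; moreover the $t$-column of the lower block is zero, so on a fixed total degree those rows see only the $x$-exponents and reproduce $>$. Hence for homogeneous $F\neq 0$ of degree $e$ the monomials $x^\alpha t^{e-|\alpha|}$ are ordered by $>_h$ exactly as the $x^\alpha$ are ordered by $>$; as $\pi$ is injective on the support of a homogeneous polynomial there is no cancellation, and therefore $LM_>(\pi F)=\pi(LM_{>_h}(F))$. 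Taking $F=f^h$ yields $LM_{>_h}(f^h)=(LM_>(f))^h$ for all $f\in K[x_1,\dots,x_n]$. Because $\mathcal{A}$ is graded (the $G_i$ are homogeneous) and $>_h$ refines total degree, a homogeneous Sagbi basis exists, and I assume $\widehat{S}$ is homogeneous — this is automatic for the output of the Sagbi algorithm on the homogeneous input $G_1,\dots,G_m$, and it gives $LM_>(s_j)=\pi(LM_{>_h}(S_j))$.

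Next I would dispose of the localization. For $g\in K[g_1,\dots,g_m]_>\setminus\{0\}$ write $g=p/u$ with $p,u\in K[g_1,\dots,g_m]$ and $LM_>(u)=1$; taking $u$ as the multiplier in Definition 1.7 gives $LM_>(g)=LM_>(ug)=LM_>(p)$. Thus $L(\mathcal{G})=\{LM_>(p)\mid p\in K[g_1,\dots,g_m]\setminus\{0\}\}$, and since each $s_j\in\mathcal{G}$ the inclusion $K[L(S)]\subseteq K[L(\mathcal{G})]$ is immediate; the content is to show every $LM_>(p)$ with $p\in K[g_1,\dots,g_m]\setminus\{0\}$ is a power product of the $LM_>(s_j)$. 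Granting the dictionary and the Sagbi hypothesis $K[L_{>_h}(\mathcal{A})]=K[LM_{>_h}(S_1),\dots,LM_{>_h}(S_k)]$, this reduces to producing, for each such $p$, a \emph{homogeneous} $F\in\mathcal{A}$ with $LM_>(\pi F)=LM_>(p)$: indeed $LM_{>_h}(F)$ is then a power product of the $LM_{>_h}(S_j)$, and applying $\pi$ turns it into the same power product of the $LM_>(s_j)=\pi(LM_{>_h}(S_j))$, which equals $LM_>(\pi F)=LM_>(p)$.

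Producing that homogeneous witness $F$ is the heart of the argument and the step I expect to be the \emph{main obstacle}. The naive choices fail: $p^h\notin\mathcal{A}$ in general, while $P(G_1,\dots,G_m)$ (obtained from a representation $p=P(g_1,\dots,g_m)$) is not homogeneous, and its $>_h$-leading monomial is carried by its top total-degree part, whose image under $\pi$ has the wrong, high degree — whereas $>$ is local, so $LM_>(p)$ lives in the lowest-degree part of $p$. This degree reversal between the global $>_h$ and the local $>$, together with the cancellation that occurs when several homogeneous components of $p$ overlap in low degree, is exactly what must be controlled; concretely one must prove that $(LM_>(p))\,t^{j}\in L_{>_h}(\mathcal{A})$ for some $j\ge 0$, and the correct $j$ is not the expected $\deg(p)-ord(p)$. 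I would overcome this by lifting $p$ to $p^h$ and running the reduction process attached to the homogeneous Sagbi basis $\widehat{S}$ inside $K[t,x_1,\dots,x_n]$, then setting $t=1$ — the Sagbi analogue of Mora's tangent-cone reduction mentioned in the introduction. The key point to verify is that each such reduction step, after dehomogenization, does not disturb the lowest-degree leading behaviour, so that the process eventually exhibits $LM_>(p)$ as $\pi$ of a leading monomial of $\mathcal{A}$, i.e. yields the required $F$.
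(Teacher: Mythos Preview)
Your overall strategy coincides with the paper's: dehomogenize via $\pi\colon t\mapsto 1$, clear denominators with a unit $u\in S_>\cap K[H]$ so that it suffices to treat $p=ug\in K[g_1,\dots,g_m]$, and exploit the compatibility $LM_>(\pi F)=\pi(LM_{>_h}(F))$ for homogeneous $F$. Where you diverge is in producing the homogeneous witness $F\in\mathcal{A}=K[G_1,\dots,G_m]$ with $\pi(LM_{>_h}(F))=LM_>(p)$. The paper does not invoke any Mora-type reduction: from a representation $p=\sum_j\gamma_jH^{\alpha_j}$ it uses $(H^{\alpha_j})^h=G^{\alpha_j}$ and simply writes $t^{\rho}u^{h}g^{h}=\sum_j\gamma_jG^{\alpha_j}$ for a suitable $\rho$, treats this as an element of $\mathcal{A}$, applies the Sagbi hypothesis to obtain $LM_{>_h}(t^{\rho}u^{h}g^{h})=LM_{>_h}(\widehat{S}^{\alpha})$, and sets $t=1$. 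So relative to the paper you are not missing an idea---you are being more scrupulous about precisely the step the paper passes over in one line.

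That scruple is warranted, and in fact neither your proposed workaround nor the paper's displayed identity can be completed for the statement as written. The right-hand side $\sum_j\gamma_jG^{\alpha_j}$ is a sum of homogeneous polynomials of \emph{different} degrees, so it cannot equal the homogeneous element $t^{\rho}(ug)^h$; equalizing the degrees would require multiplying individual summands by powers of $t$, but $t\notin K[G_1,\dots,G_m]$ in general. Concretely, take $n=1$, $g_1=x^2$, $g_2=x^2+x^3$ with the local ordering on $K[x]$: then $G_1=x^2$, $G_2=tx^2+x^3$, the leading monomials $x^2$ and $tx^2$ admit no binomial relation, so $\widehat{S}=\{G_1,G_2\}$ is already a Sagbi basis of $K[G_1,G_2]$ for $>_h$; yet $g_2-g_1=x^3$ has $LM_>(x^3)=x^3\notin K[x^2]=K[LM_>(g_1),LM_>(g_2)]$, so $\{g_1,g_2\}$ is not a Sasbi basis. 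In particular the claim you set out to verify, that $(LM_>(p))\,t^{j}\in L_{>_h}(\mathcal{A})$ for some $j$, fails for $p=x^3$, and no reduction scheme on $p^h$ will supply the missing $t$-power. Both arguments---the paper's short one and your longer one---go through cleanly once $K[G_1,\dots,G_m]$ is replaced by $K[t,G_1,\dots,G_m]$ (the standard move in Lazard's homogenization for ideals); then $t^{\rho}(ug)^h$ genuinely lies in the algebra and the paper's two-line dehomogenization finishes the proof without any tangent-cone detour.
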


\begin{proof}
 We want to show $S$ is a Sasbi basis for $K[H]_>$. For this we have to show that
\\1. $S \subset K[H]_>$.
\\2. For $g\in K[H]_>$ there exist $ \alpha =( \alpha_1,\alpha_2,\ldots,\alpha_k)\in \mathbb{N}^k$ such that $LM(g)=LM(S^\alpha)$.
\\1) We know that $\widehat{S}=\{S_1,\ldots,S_k \}$ is a Sagbi basis of  $K[G_1,\ldots,G_m]$ so $S_i=\sum \gamma_{i,j}G^{\alpha_{i,j}}$ with $\gamma_{i,j} \in K $.
Put $t=1$ we get $s_i=\sum \gamma _{i,j}H^{\alpha_{i,j}}$ this implies $s_i\in K[H]_{>}$.
\\2)For $g \in K[H]_>$  there exists $u\in S_> \cap K[H]$ such that $u.g=\sum \gamma _j H^{\alpha_{j}}$, then there exists $\rho \in \mathbb{Z}$ such that
$t^{\rho}.u^{h}.g^{h}=\sum \gamma  _i (H^{\alpha_{j}})^h=\sum \gamma _{j} G^{\alpha_{j}}$. We have  that
$\widehat{S}$ is Sagbi basis of $K[G_1,\ldots,G_m]$. Then there exists $\alpha =( \alpha_1,\alpha_2,\ldots,\alpha_k )\in \mathbb{N}^{k}$ such that $LM(t^{\rho}.u^{h}.g^{h})=LM(S^{\alpha})$. Since $LM(G)|_{t=1}=LM(G|_{t=1})$,therefore
  $LM(t^{\rho}.u^{h}.g^{h})|_{t=1} =LM(g)$, since $LM(u)=1$ as $u\in S_>$ and  $LM(\widehat{S}^{\alpha})_{t=1}=LM(S^\alpha)$,
we obtain $LM(g)=LM(S^\alpha)$.
\end{proof}

Theorem $3.3$  shows that Sasbi bases are computable in many cases. It turns out that similar to the theory of standard bases with respect to local orderings for ideals this approach is not very efficient. Therefore one should like to have an efficient way for computing Sasbi bases. The basis for this is the concept of the normal form.

\begin{defn}
Let $G$ and $g$ be a finite subset and a polynomial in $K[x_1,\ldots,x_n]$ respectively,such that $K[G]_{>}$ admits a finite Sasbi bases and $g\in K[G]$. \\We say that a polynomial $h$ is a \textbf{Weak SASBI normal form} of $g$ with respect to $G$, and we write $h=SNF(g|G)$, if\\
0. $h=SNF(0| \ G)=0$\\
1. $h\neq 0 \Rightarrow$ $LM(h)  \notin K[LM(G)]$
\\2. There exist unit $u\in S_{>}\cap K[G]$ such that $ug-h$ has a  representation with respect to $G$, that is either $ug-h=0$ or $ug-h=\sum_{i=1}^{v }\gamma_{i}G^{\alpha_{i}}$ where $\gamma_{i}\in K$ and $LM(g)= {\max}_{i=1}^{v} \{LM(\gamma_{i}G^{\alpha_{i}})\}$.
This representation is called $\textbf{SASBI representation}$.

\end{defn}
\begin{alg}$\,$\\
Input: $f$, $G$, $>$ a local monomial ordering. We assume that $G=\{g_1,\ldots,g_s\}$ and $f$ are subset and polynomial in $K[x_1,\ldots,x_n]$ such that $f\in K[G]$. We also assume there  exist a finite Sagbi basis of $K[H]$ where $H=G^h$ the homogenization of $G$ with respect to $``t"$, a new variable.
\\Output: $h\in K[x]$ a polynomial weak Sasbi normal form of $f$ with respect to $G$.
\begin{itemize}
\item $h:=f$
\item $T:=G$
\item while$(h\neq 0$ and
$T_h=\{{T^{\alpha}},\,T\hbox{-monomial} | \  LM( T^{\alpha})=LM(h)  \}\neq \phi $\\
\indent choose $T^{\alpha} \in T_h$ such that $ecart(T^{\alpha})$ is minimal;\\
\indent if $ecart(T^{\alpha}) > ecart(h)$\\
 \indent \quad   $T:=T \cup \{h\}$;\\
\indent $h=h-\frac{LC(h)}{LC(H^{\alpha})} T^{\alpha}\, \hbox{for some}\, \gamma\in K ; $
\item return $h$;
\end{itemize}
\end{alg}

\begin{proof}
Termination is most easily seen by using homogenization: start with $h:=f^h$ and $ H:=G^h=\{g^h |\,g\in G \}$.
  \\The while loop looks as follows
 \begin{itemize}
\item $\hbox{while}(h \neq 0$ and $T_h=\{H^{\alpha},\,H\hbox{-monomial}\,  |\, LM(H^{\alpha})=t^{\beta }LM(h)\hbox{ for some } \beta  \}$)\\
\indent choose $g\in T_h$ in a way with $\beta \geq 0$ is minimal;\\
\indent if $\beta > 0$\\
\indent  \quad $T=T \cup \{h\}$;\\
\indent $h:=h-\frac{LC(h)}{LC(H^{\alpha}} H^{\alpha} $;\\
\indent $h:=(h|_{t=1})^h$;
\end{itemize}
By our assumption $K[H]$ has a finite sagbi bases, there exists some positive integer $N$ such that $K[L(T_v)]$ becomes stable for $v \geq N$, where $T_v$ denotes the set $T$ after the $v$-th turn of the while loop. The next $h$, satisfies $LM(h)\in K[L(T_N)]=K[L(H)]$, whence  $LM(h)=LM(H^{\alpha})$ for some $H^{\alpha}\in K[H]$  and $\beta =0$, that is, $T_v$ itself becomes stable for $v \geq N$ and the algorithm continues with fixed $T$. Then it terminates, since $>$ is a well ordering on $K[t,x]$.

 To see the correctness, consider the $i$-th turn in the while loop of algorithm.
\\There we create  $T_i= \{g_1,g_2,...,g_s,h_0,h_1,\ldots,h_{i-2}\}$ such that $h_i=h_{i-1} - \gamma^i{T^{\alpha^{(i)}}}$ and $LM(T^{\alpha^{(i)}})=LM(h_{i-1})>LM(h_i)$ where $T^{\alpha^{(i)}}$ is $T_i$- monomial.
\\Suppose, by induction, that in the first $i-1$ steps  we have constructed SASBI representations
$$u_jf=\sum\limits_{l=1}^{v^{(j)}}{\gamma_{l}^{(j)}}{G^{\alpha_{l}^{(j)}}}+h_j \  \hbox{where} \  {\gamma_{l}^{j}}\in K \  \hbox{and} \  LM(f)= {\max}_{l=1}^{v^{(j)}}\{LM({\gamma_{l}^{(j)}}{G^{\alpha_{l}^{(j)}}})\}.$$                          \\ where $u_j \in S_{>} \cap K[G]$ and   $1\leq j \leq{i-1}$
\\ We have to prove $ \ \exists u_i\in  S_{>} \cap K[G]$ and $u_if=\sum\limits_{l=1}^{v^{(i)}}{\gamma_{l}^{(i)}}{G^{\alpha_{l}^{(i)}}}+ h_i$ and $ LM(f)= {\max}_{l=1}^{v^{(i)}}\{LM({\gamma_{l}^{(i)}}{G^{\alpha_{l}^{(i)}}})\}.$
\\
\\We have two possibilities
\\$1)$ $T^{\alpha^{(i)}}=G^{\alpha^{(i)}}$ is $G$-monomial.
\\$2)$ $T^{\alpha^{(i)}}=T_i^{\alpha^{(i)}}$ is $T_i$-monomial.
\\  Induction step: Consider the SASBI representation for $j=i-1$.
$$u_{i-1}f=\sum\limits_{l=1}^{v^{(i-1)}}{\gamma_{l}^{(i-1)}}{G^{\alpha_{l}^{(i-1)}}}+h_{i-1} \,\,\,\,\, \hbox{and}\,\,\,\,\, LM(f)= {\max}_{l=1}^{v^{(i-1)}}\{LM({\gamma_{l}^{(i-1)}}{G^{\alpha_{l}^{(i-1)}}})\}.$$
 \\ For the first case in induction step , replace $h_{i-1}$ by $ \gamma^i{G^{\alpha^{(i)}}}+h_i $ ,and obtain
 $$u_{i-1}f=\sum\limits_{l=1}^{v^{(i-1)}}{\gamma_{l}^{(i-1)}}{G^{\alpha_{l}^{(i-1)}}}+\gamma^i{G^{\alpha^{(i)}}}+h_i.$$
 \\ Put $u_i=u_{i-1}$ and ${\gamma_{l}^{(i-1)}}{G^{\alpha_{l}^{(i-1)}}}={\gamma_{l}^{(i)}}{G^{\alpha_{l}^{(i)}}} \ 1\leq l \leq v^{(i-1)},\,\gamma^i{G^{\alpha^{(i)}}}={\gamma_{v_i}^{(i)}}{G^{\alpha_{v_i}^{(i)}}}$ we get the required representation
 $$u_{i}f=\sum\limits_{l=1}^{v^{(i)}}{\gamma_{l}^{(i)}}{G^{\alpha_{l}^{(i)}}}+h_i.$$
 As  $LM({\gamma_{v_i}^{(i)}}{G^{\alpha_{v_i}^{(i)}}})< LM({\gamma_{l}^{(i)}}{G^{\alpha_{l}^{(i)}}}),\,  1\leq l \leq v_i-1$, from this condition we get $LM(f)= {\max}_{l=1}^{v^{(i)}}\{LM({\gamma_{l}^{(i)}}{G^{\alpha_{l}^{(i)}}})\}$ which shows representation is Sasbi.
 \\For the second case in induction step  replace the $h_{i-1}$ by $ \gamma^i{T_i^{\alpha^{(i)}}}+h_i $ , it becomes
 $$u_{i-1}f=\sum\limits_{l=1}^{v^{(i-1)}}{\gamma_{l}^{(i-1)}}{G^{\alpha_{l}^{(i-1)}}}+\gamma^i{T_i^{\alpha^{(i)}}}+h_i.$$
 We can write ${T_i^{\alpha^{(i)}}}={G^{\beta^{(i)}}}{H^{\gamma^{(i)}}}$ where $H=\{h_0,h_1,\ldots,h_{i-2}\}$. Since we are in second case  not all the components of $\gamma^{(i)}$ are zero. Since $LM(h_{i-1})<LM(h_j)$ for $j\leq i-2$ and $LM(T^{\alpha(i)})=LM(h_{i-1})<LM(f)$ it follows that $LM(G^{\beta(i)}<1$. Since $h_j=u_jf-\sum\limits_{l=1}^{v^{(j)}}{\gamma_{l}^{(j)}}{G^{\alpha_{l}^{(j)}}}$,$1\leq j \leq{i-2}$ , we can replace $h_j's$ by this expression therefore
 $$ {T_i}^{\alpha(i)}=G^{\beta(i)}R(u_0,u_1,\ldots,u_{i-2},f,g_1,\ldots,g_s)f+G^{\beta(i)}L^{\gamma(i)}.$$
 For a suitable polynomial $R$ and $L=\{\,\sum\limits_{l=1}^{v^{(0)}}{\gamma_{l}^{(0)}}{G^{\alpha_{l}^{(0)}}},\ldots,\sum\limits_{l=1}^{v^{(i-2)}}{\gamma_{l}^{(i-2)}}{G^{\alpha_{l}^{(i-2)}}}   \, \}   $.

  Since $u_0,u_1,\ldots,u_{i-2},f\in K[G] $ and $LM(G^{\beta(i)})<1$ it follows that
 $$u_i=u_{i-1}-\gamma^{i}G^{\beta(i)}R\in S_>\cap K[G].$$
 Since $LM({\gamma_{l}^{(j)}}{G^{\alpha_{l}^{(j)}}})\leq LM(f) $ it follows that leading monomial of any $G$-monomial occuring in $G^{\beta(i)}L^{\gamma(i)}$ is smaller than the leading monomial of $f$. This implies
  $$u_{i}f=\sum\limits_{l=1}^{v^{(i-1)}}{\gamma_{l}^{(i-1)}}{G^{\alpha_{l}^{(i-1)}}}+\gamma^i{G^{\alpha^{(i)}}}L^{\gamma(i)}+h_i.   $$ is a sasbi representation since $LM(f)={\max}_{i=1}^{v^{(i-1)}} \{LM({\gamma_{l}^{(i-1)}}{G^{\alpha_{l}^{(i-1)}}})\}.  $
 \end{proof}

 \begin{exmp}In the localization of the univariate polynomial ring $K[x]_{>}$ where $>$ is the local ordering take $g=x^3+x^4$ and $G=\{g_1=x^3+x^6, g_2=x-x^{2} \}$ we want to compute the weak Sasbi normal form of $g$ with respect to $G$.
 \\In the first reduction we select the G-monomial $g_1=x^3+x^6$ with minimal ecart such that $LM(h_0)=LM(f)=LM(g)=x^3$ , we have ecart$(h_0)$=1, ecart$(g_1)=3$, so ecart$(g_1)>$ecart$(h_0)$ therefore  we have to enlarge $G=\{g_1=x^3+x^6, g_2=x-x^{2}, g_3=x^3+x^4 \}$   and $$h_1=h_0-g_1$$ $$x^4-x^6=x^3+x^4-(x^3+x^6)$$
 \\In the second reduction we select the $G$-product $g_2g_3=(x-x^{2})(x^3+x^{4})=x^4-x^6$ with minimal ecart such that $LM(h_1)=LM(g_2g_3)=x^4$. Now we have ecart$(g_2g_3)=2$, ecart$(h_1)=2$ so $G$ remains the same
   and $$h_2=h_1-g_2g_3 ,$$
$$0= x^4-x^6-(x^4-x^6) ,$$
we get $h_2=0$. Now we summarize and obtain
$$h_2=h_1-g_2g_3 ,$$ $$0=x^4-x^6-(x-x^2)(x^3+x^4).$$
As $h_1=g-g_1$
$$h_2=g-g_1-g_2g_3 ,$$ $$0=x^3+x^4-(x^3-x^6)-(x-x^2)(x^3+x^4),$$
 $g_3=g$ we get
$$g-gg_2=g_1+h_2 ,$$ $$x^3+x^4-(x-x^2)(x^3+x^4)=x^3-x^6 ,$$
$$(1-g_2)g=g_1+h_2$$ $$(1-x-x^2)(x^3+x^4)=x^3-x^6 .$$
we have $1-g_2=1-x-x^2\in S_>\cap K[G]$ and $h_2=0$ is the weak Sasbi normal form.

 \end{exmp}



\begin{defn}
 Let $G=\{g_1,\ldots,g_m\}\subset K[x_1,\ldots,x_n]$. Let   $$AR(G):=\{h\in K[y_1,\ldots,y_m]\,|\,h(LM(g_1),\ldots,LM(g_m))=0\}\subset K[y_1,\ldots,y_m].$$

\end{defn}
\begin{defn} Let $G\subseteq K[x_{1},...,x_{n}]$ and $\sum^{\nu}_{i=1}\gamma_{i}G^{\alpha_{i}}\in K[G] $. We define the height  $ht(\sum^{\nu}_{i=1}\gamma_{i}G^{\alpha_{i}})= max_{i=1}^{\nu}\{LM(G^{\alpha{i}})\}$.
\end{defn}
\begin{thm} (SASBI basis criterion) Let $G$={$\{g_{1},g_{2},\ldots ,g_{m}\}$} be a subset of\\$ K[x_{1},...,x_{n}]$. Assume that $K[H]$ has a finite sagbi basis where $H=\{{g_1}^h,\ldots,{g_m}^h\}$, the homogenization of $G$. Let   $\mathcal{S}:=\{P_{1},...,P_{k}\}$ be a generating set\footnote{The set of $S$-polynomials defined in definition 2.6 defines a generating set of $AR(G)$.} of  $AR(G)$. Then $G$ is a SASBI basis for $K[G]_{>}$ if and only if for each $1\leq j \leq k $, SNF($P_{j}$(G)$\mid G$)=0.
\end{thm}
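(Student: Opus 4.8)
The plan is to prove the two implications separately. The implication ``$G$ a SASBI basis $\Rightarrow$ every normal form vanishes'' is a short deduction from the defining properties of the weak SASBI normal form and uses neither the finiteness hypothesis on $K[H]$ nor the fact that $\mathcal{S}$ generates $AR(G)$. For this direction fix $P_j\in\mathcal{S}\subseteq AR(G)$. Since $P_j\in AR(G)$ we have $P_j(G):=P_j(g_1,\dots,g_m)\in K[G]\subseteq K[G]_>$, so put $h:=SNF(P_j(G)\mid G)$. By the definition of the weak SASBI normal form there is a unit $u\in S_>\cap K[G]$ with $u\,P_j(G)-h=\sum_i\gamma_iG^{\alpha_i}$, whence $h=u\,P_j(G)-\sum_i\gamma_iG^{\alpha_i}\in K[G]$. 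If $h\neq0$, then property~1 of the normal form gives $LM(h)\notin K[LM(G)]$, while $h\in K[G]_>\setminus\{0\}$ together with the SASBI property of $G$ forces $LM(h)=LM(G^\alpha)\in K[LM(G)]$ for some $G$-monomial. This contradiction yields $h=0$.

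For the converse, assume $SNF(P_j(G)\mid G)=0$ for every $j$ and take $g\in K[G]_>\setminus\{0\}$; we must show $LM(g)\in K[LM(G)]$. Writing $g=f/u$ with $f\in K[G]$ and $u\in S_>\cap K[G]$ gives $ug=f\in K[G]$ and $LM(g)=LM(ug)$, so $ug$ admits a $G$-representation. Among all pairs consisting of a unit $w\in S_>\cap K[G]$ and a representation $wg=\sum_i\gamma_iG^{\alpha_i}$, choose one of minimal height $H^*=ht(\sum_i\gamma_iG^{\alpha_i})=\max_iLM(G^{\alpha_i})$; a minimum exists because every such height is $\geq LM(g)$ and, $>$ being a local degree ordering, the monomials $\geq LM(g)$ all have degree $\leq\deg(LM(g))$ and therefore form a finite set. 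If $H^*=LM(g)$, the leading terms of those $G^{\alpha_i}$ with $LM(G^{\alpha_i})=H^*$ add up to $LT(wg)\neq0$, so $LM(g)=LM(G^{\alpha_i})\in K[LM(G)]$ for some $i$ and we are done.

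It remains to rule out $H^*>LM(g)$, which is the heart of the proof. In that case the top terms must cancel, $\sum_{LM(G^{\alpha_i})=H^*}\gamma_i\,LT(G^{\alpha_i})=0$, and reading this relation through $y_i\mapsto LT(g_i)$ (equivalently, after rescaling the $y_i$ by the $LC(g_i)$, through $y_i\mapsto LM(g_i)$) produces a polynomial $P=\sum_{LM(G^{\alpha_i})=H^*}\gamma_i'y^{\alpha_i}\in AR(G)$, homogeneous of degree $H^*$ for the $\mathbb{N}^n$-grading $\deg y_i=LE(g_i)$, under which $AR(G)$ is a homogeneous ideal. Because of this, we may take the generators $P_j$ homogeneous (for instance the binomial $S$-polynomial relations of Definition~2.6, which by the footnote generate $AR(G)$), and write $P=\sum_l m_lP_{j_l}$ with $y$-monomials $m_l$, each summand homogeneous of degree $H^*$. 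Applying $y_i\mapsto g_i$ rewrites the top block as $\sum_{LM(G^{\alpha_i})=H^*}\gamma_iG^{\alpha_i}=\sum_lG^{\mu_l}P_{j_l}(G)$ with $LM(G^{\mu_l})=H^*/H_{j_l}$, where $H_{j_l}$ is the degree of the homogeneous generator $P_{j_l}$. Now the hypothesis enters: each $P_{j_l}(G)$ has vanishing normal form, hence a SASBI representation $w_{j_l}P_{j_l}(G)=\sum_p\delta_pG^{\nu_p}$ of height exactly $LM(P_{j_l}(G))$, which is strictly below $H_{j_l}$ precisely because $P_{j_l}\in AR(G)$ annihilates the top. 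Thus each $w_{j_l}G^{\mu_l}P_{j_l}(G)$ has a $G$-representation of height $(H^*/H_{j_l})\,LM(P_{j_l}(G))<H^*$; multiplying everything by the unit $W=\prod_lw_{j_l}\in S_>\cap K[G]$ (using that a unit has leading monomial $1$, so unit multiplication never raises the height of a $G$-representation) exhibits a $G$-representation of $Wwg$ of height strictly below $H^*$, contradicting minimality. Hence $H^*=LM(g)$ and $G$ is a SASBI basis.

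The step I expect to be the main obstacle is this last rewriting. Three points require genuine care: that the cancellation among the top terms really lands in $AR(G)$ once the leading coefficients $LC(G^{\alpha_i})$ are tracked honestly; that $AR(G)$ is homogeneous for the $LE$-grading, so that the homogeneous relation $P$ splits into multiples of the generators of the same degree $H^*$ (this is also what lets us work with a homogeneous generating set); and, above all, that substituting the SASBI representations of the $P_{j_l}(G)$ strictly lowers the height. The strict inequality $LM(G^{\mu_l})\cdot LM(P_{j_l}(G))<H^*$ rests on $LM(P_{j_l}(G))<H_{j_l}$, i.e.\ on $P_{j_l}\in AR(G)$, and the verification that unit multiples preserve the height bound is the only remaining nonformal point.
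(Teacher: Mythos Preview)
Your argument follows the same minimal-height contradiction as the paper: the forward direction is identical, and for the converse both you and the paper write the top-cancellation relation as an element $P\in AR(G)$, expand it over a generating set, substitute the SASBI representations of the $P_j(G)$ coming from the hypothesis, and contradict minimality of the height. Your handling of the combined unit $W=\prod_l w_{j_l}$ and your explicit use of the $\mathbb{N}^n$-grading on $AR(G)$ are cleaner than the paper's, which simply asserts the crucial height equality $\max_j ht(f_j(G))\,ht(P_j(G))=X$ without proof; your justification of the existence of a minimal height (via the local degree property) is also a point the paper passes over in silence.

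There is one genuine wrinkle, though. When you write ``we may take the generators $P_j$ homogeneous,'' you are replacing the given generating set $\mathcal{S}$ by another one (the binomial $S$-polynomials, or the homogeneous components of the $P_j$), but the hypothesis $SNF(P_j(G)\mid G)=0$ is stated only for the original $P_j\in\mathcal{S}$, and it is not automatic that it transfers to the new generators: knowing a SASBI representation for $w_jP_j(G)$ does not directly yield one for $w\,P_j^{(d)}(G)$. The paper avoids this particular gap by keeping the given (possibly inhomogeneous) $P_j$ throughout and expanding $P=\sum_j f_jP_j$; the price it pays is the unjustified height assertion above, which your homogeneity argument is precisely designed to supply. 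So the two proofs share the same skeleton and the same delicate point, but locate the hand-waving differently: yours would be fully rigorous if the theorem were stated for a homogeneous generating set (which, per the footnote and the intended algorithmic use, is the case of interest).
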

 \begin{proof} $(\Rightarrow)$ Suppose  that  $SNF(P_{j}(G)\mid G)\neq0$. This implies that $LM(SNF(P_{j}(G)|\\ G)\notin K[LM(G)]$ by the property of the weak Sasbi normal form . We have $P_j(G)\in K[G]$ therefore $SNF(P_{j}(G)\mid G)\in K[G]$. Since $G$ is a SASBI basis of $K[G]_>$ we have $LM(SNF(P_{j}(G)\mid G)\in K[LM(G)]$. This is contradiction to the assumption that $LM(SNF(P_{j}(G)\mid G)\notin K[LM(G)]$.
  \\$(\Leftarrow)$ To prove that $G$ is SASBI basis , we have to prove  that $g\in K[G]_> $ has a SASBI representation with respect to $G$, that is there exist $u\in S_>\cap K[G]$ such that
   $$ug=\sum_{i=1}^{\nu}\gamma_{i}G^{\alpha_{i}} \,\, with\,\,  LM(g)=ht(\sum_{i=1}^{\nu}\gamma_{i}G^{\alpha_{i}})$$

Let $g\in K[G]_{>}$, choose $u\in S_>\cap K[G]$ such that  $ug=\sum_{i=1}^{\nu}\gamma_{i}G^{\alpha_{i}}$, furthermore, we assume that this representation has the smallest possible height of all possible representations of $ug$ in $K[G]$. We denote this height by X:=$max_{i=1}^{\nu}\{LM(G^{\alpha_{i}})\}$. It is clear that $LM(g)\leq X$.
 Suppose that $LM(g)\lvertneqq X$. Without loss of generality, let the first $\mu$ summands in the above representation of $g$, be the ones for which  X=$LM(G^{\alpha_{i}})$. Then cancelation of their leading terms must occur, that is, $\sum_{i=1}^{\mu}\gamma_{i}LT(G^{\alpha_{i}})=0$, and hence we obtain a polynomial in $K[y_{1},...,y_{m}]$,\,$P(y)=\sum_{i=1}^{\mu}\gamma_{i}y^{\alpha_{i}}\in AR(G)$. Since,  $\mathcal{S}=\{P_{1},...,P_{k}\}$ is a generating set of $AR(G)$ we can write
 $$\hspace{4.4 cm}P(y)=\sum_{j=1}^{k}f_{j}P_{j}(y)\hspace{4 cm}(* ) $$
  For suitable $f_j\in K[y_1,\ldots,y_m]$. Furthermore , note that \\ $$ht(P(G))=max_{j=1}^{k}ht(g_{j}(G))ht(P_j(G))=X  $$ \\
  where, $f_{j}(G) \ and\ P_{j}(G) \ $ are considered as expressions in the $g_{i}\ 's$ \\
  On the other hand :
   \\ By assumption we have   for all $1\leq j \leq k \ ,\, SNF(P_{j}(G)\mid G)=0$, which means that $w_jP_{j}(G)$  has a SASBI representation,  $w_jP_{j}(G)=\sum_{l=1}^{\nu_{j}}\gamma_{l_{j}}G^{\alpha_{l_{j}}}$, for suitable $w_j\in S_>\cap K[G]$ and   $LM(P_{j}(G))=max_{l=1}^{\nu_{j}}\{LM(G^{\alpha_{l_{j}}})\}\lvertneqq ht(P_{j}(G))$. The inequality is strict since $P_{j}\in AR(G)$, we may assume that $w=w_j  ,\,\hbox{where}\,\, 1\leq j \leq k$ . For each $j$, we have

 $$ \hspace{3.75 cm}wf_{j}(G)P_{j}(G)=\sum_{i=1}^{\nu_{j}}\gamma_{l_{j}}g_{j}(G)G^\alpha_{l_{j}}.\hspace{3 cm}(**)$$

If we define $X_{j}$ to be the height of the right hand side in the equation , then obtain
 $$ X_{j}\lvertneqq max_{j=1}^{k}ht(f_{j}(G).ht(P_{j}(G))=X . $$
Finally, the equations $(*)$ and $(**)$  imply that :
 $$ug=P(G)+\sum_{i=\mu+1}^{\nu}\gamma_{i}G^{\alpha_{i}}$$
 $$= \underbrace{\sum_{j=1}^{k}\sum_{l=1}^{\nu_j}\gamma_{j_{j}}f_{j}(G)G^{\alpha_{l_{j}}}}_{sum _1}+\underbrace{\sum_{i=\mu+1}^{\nu}\gamma_{i}G^{\alpha_{i}}}_{sum_2}.$$
If we examine the expressions of the above equation, we see that  $ \, X_j<X \,;\,
\hbox {for all}\,\, 1\leq j\leq k \,\,\hbox{therefore}\,\, ht(sum_{1})=max_{j=1}^{k}X_{j}<X.$  By the choice of   $ \mu,\ ht(sum_{2}< X).$ \\ But this contradicts our  assumption that we have chosen a representation of $h$ with  smallest possible height. Thus, $G$ is a SASBI basis of $K[G]_{>}$.
\end{proof}
This theorem is the base of following algorithm :

\begin{alg}
  Let $>$ be a local monomial ordering on $K[x_{1},....,x_{n}]$.
\\Input: A finite subset ${G}\subset {K}[x_{1},....,x_{n}]$. Assume $K[G]_{>}$ admits a finite SASBI basis and $K[H]$ admits a finite sagbi basis where $H=G^h$ is the homogenization with respect to new variable ``t".\\
Output: A SASBI basis ${F}$ for ${K}[{G}]_>$.\\
\begin{itemize}
\item $F=G$;
 \item $old F=\phi$;
\item While ($F\neq old F$)\\
\indent  Compute a generating set $\mathcal{S}$ for $AR({F})$;\\
\indent $\mathcal{P}=\mathcal{S}({F})$; \\
\indent $Red$=\{$\textrm{SNF}(p\mid {F})\mid p\in \mathcal{P}\setminus\{0\}\}$;\\
\indent $old{F}={F}$;\\
\indent ${F}={F}\cup Red$;\\
\item return ${F}$;
\end{itemize}
\end{alg}
\begin{exmp}
Let $G=\{g_1=x^4,g_2=x4+x5+x6, g_3=y^2, g_4=x^7, g_5=y^3+x^8\}$ is a subset $K[x,y]$ and $>$ the degree lexicographical  local monomial ordering. We consider $K[G]_>=K[x^4,x4+x5+x^6, y^2, x^7, y^3+x^8]_>$ . Then we have   an ideal $AR(G)(G)=(s_1=x^8y^3+\frac{1}{2}x^{16}, s_1=x^5+x^6$). We can take the reduction of $s_1$ by ${g_1}^2g_5$ (with minimal ecart) we obtain $$h=s_1-{g_1}^2g_5$$ $$x^8y^3+\frac{1}{2}x^{16}-{(x^4)}^2(y3+x^8)=0$$ so $SNF(s_1\,|\,G)=0$. There is no $G$-monomial $G^{\alpha}$ such that $LM(G^{\alpha})=LM(s_2)=x^5$, so $SNF(s_2\,|\,G)=x^5+x^6$. We have new
$G=G\cup\{ g_6=x^5+x^6\}$. Then we have new $AR(G)(G)=(s_1=x^8y^3+\frac{1}{2}x^{16}, s_1=x^5+x^6$), so obviously  weak sasbi normal form of $s_1$ and $s_2$ are $0$. This shows that $G=\{g_1,g_2, g_3, g_4, g_5, g_6\}$ is a Sasbi basis of $K[x^4,x4+x5+x^6, y^2, x^7, y^3+x^8]_>$.
\end{exmp}

We have presented the theory of sasbi basis for $K[G]_>$, where $G$ is finite subset of $K[x_1,\ldots,x_n]$ and $>$ is local orderings, but it is still an open problem for  mixed orderings.
\section{Implementation in SINGULAR  }
In this section we will give an overview of the main procedures which we have implemented in SINGULAR. In this overview we will present these procedures and give by concrete SINGULAR examples to explain their usage. We have implemented three types of procedures:
\\

1) \underline{\textbf{Weak sasbi Normal form procedure}}\\

 " WSNF procedure": It is an  implementation of Algorithm 3 (ecart driven normal form) to obtain weak sasbi normal form of a polynomial.\\

\textbf{SINGULAR Procedure}:\\

\texttt{LIB"algebra.lib"}  ;// \texttt{we need this library for "algebra\_containment"
\\ \indent \indent \qquad \qquad \qquad \qquad // procedure}    \\

\texttt{proc WSNF(poly f,ideal I)}\\
\indent \texttt{\{ }\\
\indent\quad \texttt{ideal G=I} ;\\
\indent \quad \texttt{poly h=f} ;\\
\indent\quad \texttt{poly h1,j };\\
\indent \quad \texttt{list L} ;\\
\indent \quad \texttt{map psi ;}\\
\indent \quad \texttt{while(h!=0 \&\& h1!=h)}\\
\indent  \quad\texttt{\{}\\
\indent \quad\quad   \texttt{L= algebra\_containment(lead(h),lead(G),1)} ;\\
\indent \quad\quad  \texttt{if (L[1]==1)}\\
\indent \quad\quad \quad  \texttt{\{}\\
\indent \quad\quad \quad \quad  \texttt{def s= L[2]} ;\\
\indent \quad\quad \quad \quad  \texttt{psi= s,maxideal(1),G };\\
\indent \quad\quad \quad \quad \texttt{ j= psi(check)} ;\\
\indent \quad\quad \quad \quad  \texttt{if (ecart(h)$<$ecart(j))}\\
\indent \quad\quad \quad \quad \texttt{\{}\\
\indent \quad\quad \quad \quad \quad  \texttt{G[size(G)+1]=h };\\
\indent \quad\quad \quad \quad \texttt{\}}\\
\indent \quad\quad \quad \quad  \texttt{h1=h };\\
\indent \quad\quad \quad \quad  \texttt{h=h-j };\\
\indent \quad\quad \quad \quad  \texttt{kill s };\\
\indent \quad\quad \quad  \}\\
\indent \quad\texttt{\}}\\
\indent \quad\texttt{ return (h)} ;\\
\indent\texttt{\}}\\

\textbf{SINGULAR Example 4.1 }\\

 \texttt{ring r=0, (x,y), Ds };\\
 \indent\texttt{ideal i=x2,x4+x5+x6,x7,y2,y3+x8 };\\
 \indent\texttt{poly f=x4y3+y5} ;\\
 \indent\texttt{WSNF(f, i) };\\
 \indent$=>$ \texttt{x5y3-x6y3-x8y2-x12-x13-x14} \\

 \indent\texttt{ring r=0, (x), ls }; // example 3.6\\
 \indent\texttt{ideal i=x3+x4} ;\\
 \indent\texttt{poly g=x3+x6, x-x2;} \\
 \indent\texttt{WSNF(g, i) };\\
 \indent\texttt{$=>$ 0} \\

2) \underline{\textbf{Procedure to compute S-polynomials}}\\

"sasbiSpoly procedure": This procedure computes the generators of $AR(G)$
(defined in definition 3.10) which are S-polynomials.\\

\textbf{SINGULAR Procedure}:\\

\texttt{LIB"elim.lib" ;$\,\,$// we need this library for "nselect" procedure}

\texttt{proc sasbiSpoly(ideal id)}\\
\indent \{\\
\indent \quad \texttt{def bsr= basering };\\
 \indent \quad  \texttt{ideal vars = maxideal(1)} ;\\
 \indent \quad \texttt{int n=nvars(bsr)} ;\\
 \indent \quad \texttt{int m=ncols(id) };\\
 \indent \quad \texttt{int z} ;\\
 \indent \quad \texttt{ideal p };\\
 \indent \quad \texttt{if(id==0)}\\
 \indent\quad\texttt{\{}\\
 \indent \quad \quad \texttt{return(p) };\\
 \indent\quad \}\\
\indent\quad \texttt{else}\\
 \indent\quad\{\\
   \indent \quad \quad \texttt{execute("ring R1=("+charstr(bsr)+"),(@y(1..m),"+varstr(bsr)+"),\\
   \indent \qquad \quad \quad \qquad  (ds(m),ds(n));");}\\
   \indent \quad \quad \texttt{ideal id =imap(bsr,id)} ;\\
   \indent \quad \quad \texttt{ideal A };\\
   \indent \quad \quad \texttt{for (z=1; z$<$=m; z++)}\\
   \indent \quad \quad \texttt{\{}\\
     \indent \quad \quad \quad \texttt{A[z]=lead(id[z])-@y(z) };\\
   \indent \quad \quad  \}\\
   \indent \quad \quad \texttt{A=std(A)} ;\\
   \indent \quad \quad \texttt{ideal kern=nselect(A,m+1,m+n) };\\
   \indent \quad \quad \texttt{export kern,A} ;\\
   \indent \quad \quad\texttt{ setring bsr} ;\\
  \indent \quad \quad \texttt{map phi= R1,id };\\
   \indent \quad \quad \texttt{p=simplify(phi(kern),1)} ;\\
   \indent \quad \quad \texttt{return (p)} ;\\
 \indent \quad \}\\
\indent\}\\

\textbf{SINGULAR Example 4.2. }\\

 \texttt{ring r=0, (x,y), Ds} ;\\
 \indent\texttt{ideal i=x2,x4+x5+x6,x7,y2,y3+x8 };\\
 \indent\texttt{sasbiSpoly(i)};\\
 \indent\texttt{[1]=x5+x6} \\
 \indent\texttt{[2]=x8y3+1/2x16 }\\

3) \underline{\textbf{SASBI BASIS construction algorithm}}\\

 "Sasbi procedure": It is an iterative consequence of previous procedures to compute sasbi basis.\\

\textbf{SINGULAR Procedure}:\\

\texttt{proc Sasbi(ideal id)}\\
\indent\{\\
 \indent \quad \texttt{ideal S,oldS,Red };\\
 \indent \quad \texttt{list L} ;\\
 \indent \quad \texttt{int z,n };\\
 \indent \quad \texttt{S=id };\\
 \indent\quad \texttt{while( size(S)!=size(oldS))}\\
 \indent \quad \{\\
  \indent \quad \quad  \texttt{L=sasbiSpoly(S)} ;\\
  \indent \quad \quad  \texttt{n=size(L) };\\
   \indent \quad \quad \texttt{for (z=1; z$<$=n; z++)}\\
   \indent \quad \quad \{\\
     \indent \quad \quad \quad \texttt{Red=L[1][z] };\\
     \indent \quad \quad \quad \texttt{Red=WSNF(Red[1],S) };\\
     \indent \quad \quad \quad \texttt{oldS=S} ;\\
     \indent \quad \quad \quad \texttt{S=S+Red} ;\\
   \indent \quad \quad\}\\
 \indent \quad \}\\
\indent \quad \texttt{return(S) };\\
\indent\}\\

\textbf{SINGULAR Example 4.3. }\\

 \texttt{ring r=0, (x,y), Ds };\\
 \indent\texttt{ideal i=x2,x4+x5+x6,x7,y2,y3+x8} ;\\
 \indent\texttt{Sasbi(i)};\\
 \indent\texttt{[1]=x2}\\
 \indent\texttt{[2]=x4+x5+x6}\\
 \indent\texttt{[3]=x7}\\
 \indent\texttt{[4]=y2}\\
 \indent\texttt{[5]=y3+x8}\\
 \indent\texttt{[6]=x5+x6} \\


\end{document}